\documentclass[11pt, a4paper]{article}
\usepackage{amssymb,amsmath, wasysym, graphicx, epsfig, setspace, wrapfig, comment, pgfplots, fancyhdr, theorem, mathtools}
\usepackage{latexsym,dsfont,amsfonts,amsmath,amssymb}
\usepackage{pstricks,epsfig,psfrag,color}
\usepackage{caption}
\usepackage{subcaption}
\usepackage{soul}
\usepackage{mathrsfs}   

\setlength{\textwidth}{14cm}
\setlength{\hoffset}{-0.7cm}

\newcommand{\pd}[3]{\frac{\partial ^{#1} #2}{\partial #3}}
\newcommand{\dd}[3]{\dfrac{\rd^{#1}#2}{\rd#3^{#1}}}

\newcommand{\nrm}[2]{\ensuremath{\left\|#1\right\|_{#2}}}

\newcommand{\sumu}[0]{\sum_{\setu \subseteq \{1:s\}}}

\newcommand{\produ}[0]{\prod_{j \in \setu}}
\newcommand{\dottimes}[0]{.\hspace*{-2pt}*}

\newcommand{\N}[0]{\mathbb{N}}

\newcommand{\U}[0]{\mathbb{U}}

\pgfplotsset{compat=newest}
\pgfplotsset{plot coordinates/math parser=false}
\newlength\figureheight
\newlength\figurewidth




\newcommand{\W}[0]{\ensuremath{\mathcal{W}}}

\makeatletter
\newcommand{\vast}{\bBigg@{4}}
\newcommand{\Vast}{\bBigg@{5}}
\makeatother

\newcommand{\wce}[1]{\ensuremath{e_{#1}}}
\newcommand{\wcesh}[1]{\ensuremath{\wce{#1}^\mathrm{sh}}}
\newcommand{\Qsh}[1]{\ensuremath{Q_{#1}^\mathrm{sh}}}

\usepackage[normalem]{ulem}

\setlength{\bigskipamount}{5ex plus1.5ex minus 2ex}

\newtheorem{theorem}{Theorem}
\newtheorem{lemma}[theorem]{Lemma}

\newenvironment{proof}{\begin{trivlist}
    \item[\hskip\labelsep{\bf Proof.}]}{$\hfill\Box$\end{trivlist}}
{\theoremstyle{plain} \theorembodyfont{\rmfamily}
\newcounter{rem}
\newtheorem{remark}[rem]{Remark}

\newcounter{algo}
\newtheorem{algorithm}[algo]{Algorithm}



\newcommand{\bsDelta}{{\boldsymbol{\Delta}}}

\newcommand{\bsb}{{\boldsymbol{b}}}
\newcommand{\bsB}{{\boldsymbol{B}}}

\newcommand{\bsgamma}{{\boldsymbol{\gamma}}}

\newcommand{\bsp}{{\boldsymbol{p}}}

\newcommand{\bst}{{\boldsymbol{t}}}

\newcommand{\bsx}{{\boldsymbol{x}}}

\newcommand{\bsz}{{\boldsymbol{z}}}

\newcommand{\bszero}{{\boldsymbol{0}}}
\newcommand{\bsone}{{\boldsymbol{1}}}

\newcommand{\bsOmega}{\boldsymbol{\Omega}}

\newcommand{\rd}{\mathrm{d}}


\newcommand{\bbN}{\mathbb{N}}
\newcommand{\bbE}{\mathbb{E}}

\newcommand{\calO}{\mathcal{O}}




\newcommand{\setu}{{\mathrm{\mathfrak{u}}}}

\newcommand{\mask}[1]{{}}

\newcommand{\Ber}{\mathscr{B}}

\definecolor{darkred}{RGB}{139,0,0}
\definecolor{darkgreen}{RGB}{0,100,0}
\definecolor{darkmagenta}{RGB}{170,0,120}
\definecolor{darkpurple}{RGB}{110,0,180}
\definecolor{darkblue}{RGB}{40,0,200}
\definecolor{darkbrown}{rgb}{0.75,0.40,0.15}

\newcommand{\be}{\begin{equation}}
\newcommand{\ee}{\end{equation}}
\newcommand{\bea}{\begin{eqnarray}}
\newcommand{\eea}{\end{eqnarray}}
\newcommand{\beas}{\begin{eqnarray*}}
\newcommand{\eeas}{\end{eqnarray*}}




\def\r2p{{\sqrt{2\pi}}}
%


\graphicspath{{./Figures/}{./}}

%
%
%
%
%
%

\title{Hiding the weights -- CBC black box algorithms with a guaranteed error bound}
\date{\today}
\author{Alexander D. Gilbert, Frances Y. Kuo and Ian H. Sloan}

\begin{document}
\maketitle

\begin{abstract}
The component-by-component (CBC) algorithm is a method for constructing
good generating vectors for lattice rules for the efficient
computation of high-dimensional integrals in the ``weighted'' function
space setting introduced by Sloan and Wo\'zniakowski. The ``weights''
that define such spaces are needed as inputs into the CBC
algorithm, and so a natural question is, for a given problem how does
one choose the weights? This paper introduces two new 
CBC algorithms which, given bounds on 
the mixed first derivatives of the integrand, produce a randomly 
shifted lattice rule with a guaranteed bound on the root-mean-square 
error. This alleviates the need for the user to specify the weights.
We deal with ``product weights'' and ``product and order dependent (POD) weights''.
Numerical tables compare the two algorithms under various assumed bounds
on the mixed first derivatives, and provide rigorous upper
bounds on the root-mean-square integration error.
\end{abstract}

\section{Introduction}
Our aim in the current work is the efficient and relatively painless
numerical computation of high-dimensional integrals of the form
\begin{align*} 
I_sf \,\coloneqq\, \int_{[0,1]^s} f(\bsx) \,\rd \bsx \, ,
\end{align*}
where $[0, 1]^s\coloneqq [0, 1]\times \cdots \times [0, 1]$ denotes
the $s$-dimensional unit cube. A quasi-Monte Carlo (QMC) approximation of
the above integral is an equal-weight quadrature rule
\begin{align*} 
Q_{n,s}(P_n)f \,\coloneqq\, \frac{1}{n} \sum_{k = 0}^{n - 1} f(\bst_k) \, ,
\end{align*}
where the quadrature points, $P_n \coloneqq \{\bst_k\}_{k = 0}^{n - 1}$,
are chosen deterministically from $[0, 1]^s$. The setting for the error
analysis of such QMC approximations, introduced by Sloan and
Wo\'zniakowski \cite{SW98}, assumes that the integrand $f$ belongs to
some $s$-variate weighted function space $W_{s,\bsgamma}$, where in
the original formulation each variable has an associated ``weight''
parameter $\gamma_i
> 0$ whose size describes the importance of~$x_i$. Weights of this kind
$\bsgamma = \{\gamma_i\}_{i = 1}^\infty$ are incorporated into
$W_{s,\bsgamma}$ through the weighted norm $\nrm{\cdot}{s, \bsgamma}$ and
are nowadays referred to as \emph{product weights}, see
Section~\ref{sec:qmc} for more details. In Section~\ref{sec:qmc} we also
consider more general weights, that is, a parameter $\gamma_\setu$ is
allowed for each subset $\setu\subseteq\{1,\ldots,s\}$, but in the
Introduction we shall concentrate on product weights.

This paper introduces two new variants of the component-by-component (CBC)
algorithm that, given a bound on the norm of the integrand, choose not
only the QMC points but also the weights, with a view to minimising
the error of the QMC approximation. Although still playing an important role
in our constructions, the weight parameters no longer need to be chosen by the 
practitioner because this choice is handled automatically inside the algorithms.

More precisely, in such a weighted function space $W_{s,\bsgamma}$, the
\emph{worst-case error} of $Q_{n,s}$ over the unit ball of
$W_{s,\bsgamma}$, is defined by
\begin{align*} 
e_{n, s, \bsgamma}(P_n) \,\coloneqq\,
\sup_{\nrm{f}{s, \bsgamma} \leq 1} \left|I_sf - Q_{n,s}(P_n)f\right| \, ,
\end{align*}
from which it follows by linearity that the error of a QMC
approximation satisfies
\begin{align}
\label{eq:e*||f||}
\left|I_sf - Q_{n,s}(P_n)f\right| \,\leq\, e_{n, s, \bsgamma}(P_n) \nrm{f}{s, \bsgamma} \, .
\end{align}
This error bound is convenient because of its separation into the
product of two factors, one which depends only on the quadrature points
and the other which depends only on the integrand. A key aspect of the
current work is that both the worst-case error and the norm depend on the
weights.

In this paper the user is assumed to have information about the 
norm $\nrm{f}{s, \bsgamma}$ (defined in \eqref{eq:nrm_un} below) in 
the form of estimates of the parameters $B_\ell$ and $b_i$ in the 
following assumption:

\textit{\textbf{Standing Assumption:} For two sequences of positive
real numbers $(B_\ell)_{\ell = 1}^s$ and $(b_i)_{i = 1}^s$, we assume that
the mixed first derivatives of the integrand satisfy the following family
of upper bounds, for each $\setu \subseteq \{1:s\}$:
\begin{align}\label{eq:L2bnd}
\int_{[0, 1]^{|\setu|}}\left(\int_{[0, 1]^{s - |\setu|}} \pd{|\setu|}{}{\bsx_\setu} f(\bsx) \, \rd \bsx_{-\setu} \right)^2 \rd \bsx_\setu \,\leq\, B_{|\setu|} \prod_{j\in \setu} b_j^2 \, .
\end{align}
}
Here $\{1:s\}$ is shorthand for $\{1, 2, \ldots, s\}$, $\bsx_\setu = \{x_j
\}_{j \in \setu}$ are the active variables,
$\bsx_{-\setu}=\{x_j\}_{j\in\{1:s\}\setminus\setu}$ are the inactive
variables, and $\partial^{|\setu|}/\partial\bsx_\setu$ is the first-order,
mixed partial derivative with respect to $\bsx_\setu$.

Bounds of the form given in \eqref{eq:L2bnd}, together with explicit values of
$B_\ell$ and $b_i$, have been found for a particular PDE problem in several recent
papers, including \cite{KSS12}.
 
The CBC algorithm, first invented by Korobov \cite{Kor59}, and
rediscovered in \cite{SR01,SKJ02b}, is an efficient method of constructing
``good'' QMC point sets such as lattice rules
\cite{Nied92,SJ94}. The idea behind the construction is to work
through each dimension $i = 1, 2, \ldots, s$ sequentially, choosing the
$i$th component of the rule by minimising the worst-case error in that
dimension while all previous components remain fixed. Because the
worst-case error depends explicitly on the weights, the CBC algorithm
requires the weights as inputs. Thus, the weights are not only useful
for theory, but also as a practical necessity. The relevant aspects of
the CBC algorithm and general QMC theory will be presented in
Section~\ref{sec:qmc}, however, for a more complete introduction to the
concepts above see \cite{DKS13}.

The CBC construction has the virtue that, as was shown in \cite{Dick04,
Kuo03}, the worst-case error of the resulting QMC approximation converges
to zero at a rate that, depending on the weights, can be arbitrarily
close to $n^{-1}$, with a constant that can be independent of $s$. From
\cite{Dick04, Kuo03}, the root-mean-square error of a CBC generated 
``randomly shifted lattice rule'' approximation of $I_sf$ (in the special case 
of prime $n$ and product weights) is bounded above by
\begin{align}
\label{eq:errbndintro}
\left(\frac{1}{n - 1} \prod_{i = 1}^s
\left(1 + \gamma_i^\lambda \frac{2\zeta(2\lambda)}{(2\pi^2)^\lambda}\right)\right)^\frac{1}{2\lambda}
\nrm{f}{s, \bsgamma} \quad \text{for all } \lambda \in \left(\tfrac{1}{2}, 1\right] \, ,
\end{align}
where $\zeta(x) = \sum_{k=1}^\infty k^{-x}$ is the Riemann zeta
function.

Until recently the choice of weights was generally \emph{ad
hoc}, but in the paper \cite{KSS12} a new principle was used to determine
weights for a particular problem: having estimated an upper bound on the
norm of the integrand (which like the worst-case error depends on the
weights), those authors chose weights that minimise an upper bound on the
error \eqref{eq:errbndintro}. (Note that \cite{KSS12} dealt with
a specific problem of randomly shifted lattice rules applied to PDEs
with random coefficients, however the strategy of that paper can easily be
applied to other problems.) The result is a family of weight sequences
indexed by the parameter $\lambda$, where $\lambda$ affects the
theoretical rate of convergence. The fact that $\lambda$ must be chosen 
by the user is a major drawback of the strategy in \cite{KSS12}. 
One option would be to take $\lambda$ as close to $\frac{1}{2}$ as possible 
to ensure a good convergence rate, however, because of the occurrence of 
the zeta function $\zeta(2\lambda)$, the constant goes to $\infty$ as $\lambda
\rightarrow \frac{1}{2}$. A good rate of convergence does not help for a
fixed value of $n$ if the constant becomes too large. To obtain the best
bound a delicate balance between the two factors in
\eqref{eq:errbndintro} is needed.

Another drawback of the method used in \cite{KSS12} is that the bound
\eqref{eq:errbndintro} is often a crude overestimate. The first algorithm
we introduce in this paper, the \textbf{double CBC (DCBC) algorithm},
counters this while at the same time removing the need to choose
$\lambda$ by dealing with the exact ``shift-averaged'' worst-case error
(see Section~\ref{sec:qmc}), rather than the upper bound given by the
first factor in \eqref{eq:errbndintro}. For the case of product weights,
at step $i$ of the DCBC algorithm, after fixing the component of the
lattice rule, the weight $\gamma_i$ is chosen so as to minimise a bound
on the error in the current dimension. An advantage of this method is that
the choice of weight in each dimension adds virtually no extra
computational cost to the algorithm.

The second algorithm we propose begins with the upper bound
\eqref{eq:errbndintro}, and hence the family of weights indexed by
$\lambda$ obtained following the strategy in \cite{KSS12}. To choose the
``best'' $\lambda$, and in turn the ``best'' weights, an iteration of the
CBC algorithm with respect to $\lambda$ is employed to minimise
heuristically a bound on the approximation error. Because of this
iteration process it is called the \textbf{iterated CBC (ICBC) algorithm}.

The philosophy of both algorithms is to concentrate on reducing
the guaranteed error bounds.
This paradigm represents a shift away from the usual focus on
the best rate of convergence. It is particularly useful when
dealing with problems where function evaluations are highly expensive,
such as those where QMC methods have been applied to PDEs with random
coefficients \cite{KSS12}, for which there is often a practical
limit on the number of quadrature points $n$.

So far we have discussed only product weights, however, both algorithms
can be extended to cover a more general form of weights called ``POD
weights'' (see Section~\ref{sec:qmc}).

The structure of the paper is as follows. Section~\ref{sec:qmc} provides a
brief review of the relevant aspects of QMC theory. Details on our two new
algorithms are given in Section~\ref{sec:var}. In Section~\ref{sec:num_cbc} 
we give tables for the guaranteed error bounds resulting from the two
algorithms, under various assumptions on the parameters $B_\ell$ and $b_i$
in \eqref{eq:L2bnd}. The examples show that there are different situations
where each of the algorithms outperforms the other, thus it is not 
possible to say that one algorithm always outperforms the other.

\section{Relevant quasi-Monte Carlo theory}\label{sec:qmc}

Here we fix notation and briefly outline the relevant aspects of QMC
theory, including weighted function spaces for error analysis, randomly
shifted lattice rules and the CBC construction. For a more comprehensive
overview the reader is referred to the review papers \cite{DKS13,
KSS12ANZIAM} or the book \cite{DP10}.

\subsection{Weighted Sobolev spaces and randomisation}

Given a collection of positive real numbers, $\bsgamma = \{\gamma_\setu\}$
where $\setu$ denotes a finite subset of  $\N = \{1, 2, \ldots, \}$,
let $W_{s, \bsgamma}$ be the $s$-dimensional weighted Sobolev space with
unanchored norm
\begin{align} \label{eq:nrm_un}
\nrm{f}{s, \bsgamma}^2 \,=\, \sum_{\setu\subseteq \{1:s\}} \frac{1}{\gamma_\setu}
\int_{[0,1]^{|\setu|}} \left(\int_{[0,1]^{s - |\setu|}} \pd{|\setu|}{}{\bsx_\setu}
f(\bsx)\,\rd\bsx_{-\setu}\right)^2\,\rd\bsx_\setu \, .
\end{align}

In practice it is difficult to work with general weights $\gamma_\setu$
and so often weights with some inherent structure are used. The three most
common forms are \emph{product weights} where $\gamma_\setu = \prod_{j \in
\setu}\gamma_j$ for some sequence $1\geq\gamma_1 \geq \gamma_2 \geq \cdots
> 0$; \emph{order dependent weights} where each weight depends only on the
cardinality of the set, $\gamma_\setu = \Gamma_{|\setu|}$, for a sequence
of positive real numbers $\Gamma_0:=1, \Gamma_1, \Gamma_2, \dotsc$;
and \emph{product and order dependent \textnormal{(}POD\textnormal{)}
weights} which are a hybrid of the previous two, with $\gamma_\setu =
\Gamma_{|\setu|} \prod_{j \in \setu} \gamma_j$.

Given a random shift $\bsDelta$ uniformly distributed on $[0, 1]^s$,
and point set $P_n = \{\bst_k\}_{k = 0}^{n - 1}$, the randomly
shifted point set $(P_n;\boldsymbol{\Delta}) =
\{\tilde{\bst}_k\}_{k = 0}^{n - 1}$ is obtained by taking $\tilde{\bst}_k
\,=\, \left\{\bst_k+ \boldsymbol{\Delta}\right\}$, where the braces
indicate that we take the fractional part of each component in the vector
to ensure it still belongs to  $[0, 1]^s$. We will write the shifted QMC
approximation as $\Qsh{n,s}(P_n; \bsDelta)$.

In this setting, the \emph{shift-averaged worst-case error} is used as a
measure of the quality of a point set. It is simply the worst-case
error of the shifted point set, averaged in the root-mean-square
sense over all possible shifts, that is, $\wcesh{n, s, \bsgamma}(P_n)
\,\coloneqq\, (\int_{[0, 1]^s}e^2_{n,s, \bsgamma}(P_n; \bsDelta)
\,\rd\bsDelta)^{1/2}$. It then follows from the error bound
\eqref{eq:e*||f||} that the root-mean-square error of a shifted QMC
approximation (where the expected value is taken with respect to the shift
$\bsDelta$) satisfies
\begin{align}
\label{eq:rmsbnd}
\sqrt{\bbE\left(\left|I_sf - \Qsh{n,s}\left(P_n;  \cdot\right)f\right|^2\right)}
\,\leq\, \wcesh{n, s, \bsgamma}(P_n)\nrm{f}{s, \bsgamma} \, .
\end{align}
Again, the usefulness of this error bound lies in the fact that the right
hand side splits into two factors. Note that both factors depend on
the weights.

In practice we use a small number of independent and identically
distributed random shifts to estimate the error of approximation, see
e.g., \cite{DKS13}.

\subsection{Randomly shifted lattice rules}
\label{sec:lattice}

A \emph{rank-1 lattice rule} is a QMC rule for which the quadrature points
are generated by a single integer vector $\bsz$ called the
\emph{generating vector}. Each component $z_i$ belongs to $\U_n \coloneqq
\{z \in \bbN: z < n,\, \gcd(z, n) = 1\}$, the multiplicative group of
integers modulo $n$, and $\bsz \in \U_n^s = \U_n \times \dotsm \times
\U_n$. The number of positive integers less than and co-prime to $n$ is
given by the Euler totient function $\varphi(n) = |\U_n|$. So, for an
$n$-point lattice rule in $s$ dimensions there are
$\left(\varphi(n)\right)^s$ possible generating vectors.

As mentioned in the previous section, incorporating randomness into the
QMC rule is practically beneficial, and for lattice rules this is best
done by randomly shifting the points. Given some random shift
$\bsDelta\sim U\left([0, 1]^s\right)$, and generating vector $\bsz$, the
$\bsDelta$-shifted rank-1 lattice rule has points
\begin{align*} 
\tilde{\bst}_k \,=\, \left\{\frac{k\bsz}{n} + \boldsymbol{\Delta}\right\} \,
\end{align*}
for $k = 0, 1, \ldots, n - 1$.
The shift-averaged worst-case error of a randomly shifted lattice rule in
the space $\W_{s, \bsgamma}$ with general weights, is given explicitly by
(see \cite[Eq.\ (5.12)]{DKS13})
\begin{align}
\label{eq:wcelattice}
\wcesh{n, s, \bsgamma} (\bsz) \,=\,
\sqrt{\sum_{\emptyset \neq \setu \subseteq \{1:s\}} \gamma_\setu \left(\frac{1}{n} \sum_{k = 0}^{n - 1}
\prod_{j \in \setu} \Ber_2\left(\left\{\frac{kz_j}{n}\right\}\right)\right)} \,,
\end{align}
where $\Ber_2(x) = x^2 - x + \frac{1}{6}$ is the Bernoulli polynomial
of degree~$2$.

\subsection{The component-by-component construction}
\label{sec:cbc} The CBC algorithm is a method of constructing generating
vectors that results in ``good'' lattice rules in the context of
minimising the shift-averaged worst-case error \eqref{eq:wcelattice}. The
CBC construction is a greedy algorithm that works through each component
of the generating vector sequentially, choosing $z_{i}$ to \emph{minimise
the shift-averaged worst-case error in that dimension while all previous
components remain fixed}.

\begin{algorithm}[The CBC algorithm]\label{alg:cbc}\hfill\\
Given $n$ and $s$ and a sequence of weights $\bsgamma =
\{\gamma_\setu\}_{\setu\subseteq \{1:s\}}$.
\begin{enumerate}
\item Set $z_1$ to 1.
\item For $i = 2, \ldots, s$ choose $z_{i} \in \U_n$ so
    as to minimise $\wcesh{n, i, \bsgamma}(z_1, \ldots,
    z_{i - 1}, z_{i})$ given that all of the previous
    components $z_1, \ldots, z_{i - 1}$ remain fixed.
\end{enumerate}
\end{algorithm}

Setting $z_1$ to be 1 is done by convention, since in the first dimension
every choice results in an equivalent quadrature rule. Using structured
weights (product, order dependent or POD form) simplifies the formula for
the shift-averaged worst-case error \eqref{eq:wcelattice} even further,
allowing the calculation of $\wcesh{n, i, \bsgamma}(z_1, \ldots,
z_{i - 1}, z_{i})$ for all $z_{i} \in \U_n$ together
to be performed as one matrix-vector product. In general a naive
implementation of this algorithm costs $\calO(s\,n^2)$ operations, however
a fast construction performs the matrix-vector product using a fast Fourier transform (FFT)
reduces this to $\calO(s\,n\log n)$ in the case of product weights and
$\calO(s\,n\log n + s^2n)$ for order dependent or POD weights. For full
details on the \emph{Fast CBC construction} see \cite{DKS13, NC06,
NC06np}.

The shift-averaged worst-case error of a CBC generated lattice rule
satisfies the following upper bound:
\begin{align}
\label{eq:wcebnd}
\wcesh{n, s, \bsgamma}(\bsz) \,\leq\, \left(\frac{1}{\varphi(n)} \sum_{\emptyset \neq \setu \subseteq\{1:s\}}
\gamma_\setu^\lambda\left(\frac{2\zeta(2\lambda)}{(2\pi^2)^\lambda}\right)^{|\setu|}  \right)^\frac{1}{2\lambda}
\quad \text{for all } \lambda \in \left(\tfrac{1}{2}, 1\right] .
\end{align}
This result was proved in \cite{DKS13} for general weights and in
\cite{Dick04, Kuo03} for product weights.

\section{The CBC black box algorithms}{\label{sec:var}

The two new algorithms introduced in this section aim to choose weights so
as to make the bound \eqref{eq:rmsbnd} on the root-mean-square error
of the QMC approximation as small as possible. Hence we will require
a bound on the norm of the integrand $f \in \W_{s, \bsgamma}$ to be known
and of the specific form given in the Assumption \eqref{eq:L2bnd} in the Introduction.


In this way the norm of $f$ in $W_{s, \bsgamma}$, see
\eqref{eq:nrm_un}, with some, as yet unspecified, weights $\bsgamma$ will
be bounded by
\begin{align} \label{eq:nrmbnd}
\nrm{f}{s, \bsgamma}^2 \,\leq\, \sum_{\setu \subseteq \{1:s\}} \frac{1}{\gamma_\setu} B_{|\setu|} \prod_{j\in \setu} b_j^2 \,\eqqcolon\, M_{s, \bsgamma} \, ,
\end{align}
and in turn from \eqref{eq:rmsbnd} the mean-square error of a lattice rule
approximation will be bounded by
\begin{align}
\label{eq:msbnd_e*M}
\bbE\left(\left|I_sf - \Qsh{n, s}(\bsz; \cdot)f\right|^2\right) \,\leq\, \left(\wcesh{n, s, \bsgamma}(\bsz)\right)^2M_{s, \bsgamma} \, .
\end{align}

The first new algorithm is named the \textbf{double CBC (DCBC)
algorithm} since at each step two parameters are chosen: the component of
the generating vector and the  weight. We assume the weights are of
product or POD form. In the case of POD weights we assume that the order
dependent weight factors $\{\Gamma_\ell\}_{\ell = 0}^s$ are given.
Starting with the error bound \eqref{eq:msbnd_e*M}, in each
dimension, with all previous parameters remaining fixed, we
choose the component of $\bsz$ to minimise \wcesh{n, s, \bsgamma} and then
the product component of the weight to minimise the entire bound.

\subsection{The double CBC algorithm for product weights}\label{sec:dcbcpw}

In the case of product weights, the squared shift-averaged worst-case
error (see \eqref{eq:wcelattice}) of a lattice rule with generating
vector $\bsz$ is
\begin{align*} 
\left(\wcesh{n, s, \bsgamma}(z_1, \ldots, z_s)\right)^2 \,=\, &-1 + \frac{1}{n}\sum_{k = 0}^{n - 1} \prod_{j = 1}^s \left( 1 + \gamma_j
\Ber_2 \left( \left\{\frac{kz_j}{n}\right\}\right)\right) 
\\
\,=\, & \left(\wcesh{n, s-1, \bsgamma}(z_1, \ldots, z_{s - 1})\right)^2
+ \gamma_sG_s(z_1, \ldots, z_s)\,,
\end{align*}
in which the first term is independent of $\gamma_s$, and
\begin{align*} 
G_s(z_1, \ldots, z_s) \,:=\, \frac{1}{n}\sum_{k = 0}^{n - 1} \Ber_2 \left( \left\{\frac{kz_s}{n}\right\}\right)
\prod_{j = 1}^{s-1} \left(1 + \gamma_j
\Ber_2 \left( \left\{\frac{kz_j}{n}\right\}\right)\right)\, .
\end{align*}

For product weights it is natural to assume that the bound on the norm is
also of product form, that is, $B_\ell = 1$ for all $\ell = 1, 2, \ldots,
s$. It follows that this bound \eqref{eq:nrmbnd} can also be written
recursively as
\begin{align}
\label{eq:Mpw_rec}
M_{s, \bsgamma} \,=\, \sumu \prod_{j \in \setu}\frac{b_j^2}{\gamma_j}
\,=\, \prod_{j = 1}^s\left(1 + \frac{b_j^2}{\gamma_j}\right)
\,=\, \left(1 + \frac{b_s^2}{\gamma_s}\right)M_{s - 1, \bsgamma} \, .
\end{align}
In this situation, the bound \eqref{eq:msbnd_e*M} on the mean-square error
can be written as
\begin{align}
\label{eq:dcbcbndpw}
\left(\left(\wcesh{n, s - 1, \bsgamma}(z_1, \ldots, z_{s - 1})\right)^2
+ \gamma_s G_s(z_1, \ldots, z_s)\right)\left( 1 +
\frac{b_s^2}{\gamma_s}\right) M_{s - 1, \bsgamma}\, .
\end{align}

Treating \eqref{eq:dcbcbndpw} as a function of $\gamma_s$ and $z_s$, and
noting that $z_s$ is only present in $G_s$, at each step of the algorithm
we can first choose $z_s$ to minimise $G_s$ and then choose $\gamma_s$ to
minimise the entire error bound. For future reference, the minimiser of
expressions of this form is given by the following Lemma.

\begin{lemma}\label{lem:dcbcmin}
Suppose that $a, b, c, d$ are positive real numbers. Then the function
$h:(0, \infty) \rightarrow (0, \infty)$ given by $h(x) = (a + bx)(c +
\frac{d}{x}) $ is minimised by $x^* \,=\, \sqrt{\frac{ad}{bc}}$.
\end{lemma}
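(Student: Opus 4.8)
The plan is to expand the product and isolate the only two terms that actually depend on $x$. Multiplying out gives
\begin{align*}
h(x) \,=\, (a+bx)\Bigl(c+\tfrac{d}{x}\Bigr) \,=\, ac + bd + \frac{ad}{x} + bc\,x \, ,
\end{align*}
so that minimising $h$ over $(0,\infty)$ is equivalent to minimising $g(x) \coloneqq \frac{ad}{x} + bc\,x$, the constant $ac+bd$ playing no role.

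First I would apply the arithmetic--geometric mean inequality to the two positive terms of $g$. Their product is
\begin{align*}
\frac{ad}{x}\cdot bc\,x \,=\, abcd \, ,
\end{align*}
which is independent of $x$, so AM--GM yields $g(x) \geq 2\sqrt{abcd}$ with equality precisely when the two terms coincide, i.e.\ when $\frac{ad}{x} = bc\,x$. Solving this for $x$ and taking the positive root, as $x>0$, gives $x^2 = \frac{ad}{bc}$, hence $x^* = \sqrt{\frac{ad}{bc}}$, which lies in $(0,\infty)$ since $a,b,c,d>0$.

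Alternatively, a direct calculus argument works just as cleanly: differentiating gives $g'(x) = bc - \frac{ad}{x^2}$, whose unique positive zero is exactly $x^*$, and since $g''(x) = \frac{2ad}{x^3} > 0$ on $(0,\infty)$ this critical point is a global minimum. Either route is entirely routine; there is no genuine obstacle, the only points deserving a word of care being that one must select the positive root and confirm that $x^*$ is a minimiser rather than a maximiser or inflection point. I would favour the AM--GM argument, since the constancy of the product $abcd$ makes the optimality of $x^*$ immediate without any differentiation.
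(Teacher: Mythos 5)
Your proof is correct, and your preferred route is genuinely different from the paper's. The paper argues by calculus and convexity: it computes $h'(x) = bc - ad/x^2$ and $h''(x) = 2ad/x^3 > 0$ for $x > 0$, concludes $h$ is convex on $(0,\infty)$, and identifies the unique stationary point $x^* = \sqrt{ad/(bc)}$ as the global minimum --- which is essentially your ``alternative'' calculus argument, modulo the harmless preliminary expansion that strips off the constant $ac+bd$. Your primary AM--GM argument is a legitimately distinct, differentiation-free proof: since the two $x$-dependent terms $\frac{ad}{x}$ and $bc\,x$ have constant product $abcd$, the bound $h(x) \geq ac + bd + 2\sqrt{abcd}$ holds with equality exactly at $x^*$, and the equality case of AM--GM gives both existence and uniqueness of the minimiser in one stroke. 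What AM--GM buys in addition is the explicit minimum value $h(x^*) = ac + bd + 2\sqrt{abcd} = \left(\sqrt{ac}+\sqrt{bd}\right)^2$ as a by-product, which could be convenient where the lemma is applied (it yields a closed form for the minimised bound \eqref{eq:dcbcbndpw} after the weight $\gamma_i$ is chosen); what the paper's convexity argument buys is brevity and a uniqueness statement that does not rely on tracking the equality condition. Both proofs are complete and handle the one delicate point, selecting the positive root, correctly.
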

\begin{proof}
The first two derivatives of $h$ with respect to $x$ are $h^\prime(x) = bc
- ad/x^2$ and $h^{\prime\prime}(x) = 2ad/x^3 > 0$ for $x>0$, so $h$
is convex. Solving $h^\prime(x)  = 0$ yields the formula for the
stationary point $x^*$, which is the global minimum. \qquad
\end{proof}

Consequently, the choice of weight that minimises the bound on the
mean-square error \eqref{eq:dcbcbndpw} is given by, with $s$ replaced
by $i$,
\begin{align}
\label{eq:gamma_j_pw}
\gamma_{i} \,=\, \sqrt{\frac{\left(\wcesh{n, i - 1, \bsgamma}(z_1, \ldots, z_{i - 1})\right)^2
b_{i}^2}{G_{i}(z_1, \ldots, z_{i})}} \, .
\end{align}

Note that in the first dimension the upper bound on the mean-square error
\eqref{eq:dcbcbndpw} becomes $G_1\left(\gamma_1 + b_1^2\right)$, which
attains its minimum when $\gamma_1 = 0$. Since 0 is not a sensible choice
of weight our algorithm requires that $\gamma_1$ be given.

\begin{algorithm}[The double CBC algorithm for product weights]
\label{alg:dcbc_pw}
\hfill\\
Given $n$ and $s$, and bounds of the form \eqref{eq:L2bnd} with $B_\ell =
1$ for all $\ell$, and the weight in the first dimension $\gamma_1$,
set $z_1$ to 1. Then for each $i = 2, \dotsc, s$,
\begin{enumerate}
\item Choose  $z_{i} \in \U_n$ to minimise $G_{i}(z_1,
    \ldots, z_{i - 1}, z_{i})$.
\item Set $\gamma_i$ as in \eqref{eq:gamma_j_pw} and update the 
mean-square error bound \eqref{eq:dcbcbndpw}.
\end{enumerate}
\end{algorithm}

At each step of the algorithm, the process of choosing $z_{i}$ to minimise
$G_{i}$ is the same as in the original CBC algorithm. Thus, the
methods used in the fast CBC construction can also be applied in this
algorithm.

\subsection{The double CBC algorithm for POD weights}\label{sec:dcbcpod}

For weights of POD form, given a sequence of order dependent weight
factors $\{\Gamma_\ell\}$ and a bound on the norm $M_{s, \bsgamma}$ the
algorithm chooses the product component of the weights $\gamma_{i}$ in
each dimension. Note that, for this case we no longer assume all $B_\ell =
1$. As before, the first step is to obtain a recursive formula for the
bound on the norm of the integrand in each dimension. By splitting the sum
in \eqref{eq:nrmbnd} according to whether or not $s$ belongs to the
set $\setu$, we have
\begin{align}
\label{eq:Mpod_rec}
M_{s, \bsgamma}
\nonumber &= \, \sum_{\ell = 0}^s \frac{B_\ell}{\Gamma_\ell} \sum_{\substack{\setu \subseteq \{1:s\}\\
|\setu| = \ell}}\prod_{j \in \setu} \frac{b_j^2}{\gamma_j}
\,= \, \sum_{\ell = 0}^s \frac{B_\ell}{\Gamma_\ell} \left(\sum_{\substack{\setu \subseteq \{1:s-1\}\\ |\setu| = \ell}} \prod_{j\in \setu} \frac{b_j^2}{\gamma_j} + \sum_{\substack{s \in \setu \subseteq \{1:s\}\\ |\setu| = \ell}} \prod_{j \in \setu} \frac{b_j^2}{\gamma_j}\right)\\
&= \, M_{s-1, \bsgamma} + \frac{b_s^2}{\gamma_s} \sum_{\ell = 1}^s
\underbrace{\frac{B_\ell}{\Gamma_\ell}\sum_{\substack{\setu \subseteq \{1:s-1\}\\ |\setu| = \ell - 1}} \prod_{j \in \setu} \frac{b_j^2}{\gamma_j}}_{H_{s - 1, \ell - 1}}
\,= \, M_{s-1, \bsgamma} + \frac{b_s^2}{\gamma_s}\sum_{\ell = 0}^{s - 1}H_{s - 1, \ell}\,,
\end{align}
where we have introduced the terms $H_{i, \ell}$ to simplify the
notation. Applying a similar method of splitting the sum, a recursive
formula for $H_{s, \ell}$ is obtained
\begin{align}
\label{eq:Hsl}
 H_{s, \ell}
\nonumber\,&\coloneqq\, \frac{B_{\ell + 1}}{\Gamma_{\ell + 1}}\sum_{\substack{\setu \subseteq \{1:s\}\\ |\setu| = \ell}} \prod_{j \in \setu} \frac{b_j^2}{\gamma_j}
\,= \, \frac{B_{\ell + 1}}{\Gamma_{\ell + 1}}\sum_{\substack{\setu \subseteq \{1:s - 1\}\\ |\setu| = \ell}} \prod_{j \in \setu} \frac{b_j^2}{\gamma_j} + \frac{B_{\ell + 1}}{\Gamma_{\ell + 1}}\sum_{\substack{s \in \setu \subseteq \{1:s\}\\ |\setu| = \ell}} \prod_{j \in \setu} \frac{b_j^2}{\gamma_j}\\
\nonumber&=\, H_{s-1, \ell} + \frac{b_s^2B_{\ell + 1}}{\gamma_s\Gamma_{\ell + 1}}\sum_{\substack{\setu \subseteq \{1:s - 1\}\\ |\setu| = \ell - 1}} \prod_{j \in \setu} \frac{b_j^2}{\gamma_j}\\
&=\, H_{s-1, \ell} + \frac{b_s^2}{\gamma_s}\frac{B_{\ell + 1}}{B_\ell}\frac{\Gamma_\ell}{\Gamma_{\ell + 1}}H_{s - 1, \ell - 1} \, ,
\end{align}
with $H_{i, 0} = \frac{B_1}{\Gamma_1}$ for all $i = 1, 2,
\ldots, s$ and $H_{i,\ell} = 0$ for all $\ell > i$.

It follows that for POD weights the upper bound \eqref{eq:msbnd_e*M} on
the mean-square error of the QMC approximation can be written recursively
as
\begin{align}
\label{eq:dcbcbndpod}
&\bbE\left(\left|I_sf - \Qsh{n,s}(z_1, \ldots, z_s; \cdot )f\right|^2\right)\\
\nonumber \,\leq\, &\left(\left(\wcesh{n, s - 1, \bsgamma}(z_1, \ldots, z_s)\right)^2 + \gamma_s G_s(z_1, \ldots, z_s)\right)\left( M_{s - 1, \bsgamma}  + \frac{b_s^2}{\gamma_s}\sum_{\ell = 0}^{s - 1}H_{s - 1, \ell}\right)  \, ,
\end{align}
where $G_s(z_1, \ldots, z_s)$ is now given by
\begin{align}
\label{eq:Gs_ratio}
\nonumber &G_s(z_1, \ldots, z_s) \\
=\, &\frac{1}{n} \sum_{k = 0}^{n - 1} \Bigg(
\underbrace{\Ber_2\left(\left\{\frac{kz_s}{n}\right\}\right)}_{\Omega_n(z_s, k)}
\sum_{\ell = 1}^s\frac{\Gamma_\ell}{\Gamma_{\ell - 1}}
\underbrace{\sum_{\substack{\setu \subseteq \{1:s - 1\}\\|\setu| = \ell - 1}} \left(\prod_{i = 1}^{\ell - 1}\frac{\Gamma_i}{\Gamma_{i - 1}}\right)\produ \left(\gamma_j \Ber_2\left(\left\{\frac{kz_j}{n}\right\}\right)\right)}_{p_{s-1, \ell - 1}(k)}
\Bigg)\nonumber\\
=\, & \frac{1}{n} \sum_{k = 0}^{n - 1} \Omega_n(z_s, k) \sum_{\ell = 1}^s \frac{\Gamma_\ell}{\Gamma_{\ell - 1}} p_{s - 1, \ell - 1}(k) \, .
\end{align}
We have introduced the terms $\Omega_n$, $p_{s- 1, \ell - 1}$ to
simplify notation, and we have arranged to deal only with the ratios
$\Gamma_\ell/\Gamma_{\ell - 1}$ to improve numerical stability. Again by
Lemma~\ref{lem:dcbcmin}, the product component of the weight that
minimizes the bound on \eqref{eq:dcbcbndpod} is given by, with $s$
replaced by $i$,
\begin{align}
\label{eq:gamma_j_pod}
\gamma_{i} \,=\, \sqrt{\frac{\left(\wcesh{n, i, \bsgamma}(z_1, \ldots, z_{i - 1})\right)^2
b_{i}^2\sum_{\ell = 0}^{i - 1}H_{i - 1, \ell}}{M_{i - 1, \bsgamma}\,G_{i}(z_1, \ldots, z_{i})}} \, .
\end{align}

Calculating $G_s$ for all $z_s \in \U_n$ by summing over all $\setu
\subseteq \{1:s - 1\}$ as in \eqref{eq:Gs_ratio} would cost $\calO(n^22^{s
- 1})$ operations and is infeasible for even moderate $s$. The cost can be
reduced by storing $\Omega_n$ and constructing $p_{s - 1, \ell - 1}$
recursively. Letting $\boldsymbol{G}_s = \left[G_s(z_1, \ldots, z_{s - 1},
z_s)\right]_{z_s \in \U_n}$, the calculation of $G_s$ for all $z_s \in
\U_n$ can be performed by the matrix-vector product
\begin{align}
\label{eq:Gs_matrixvector}
\boldsymbol{G}_s \,=\, \frac{1}{n} \bsOmega_n \sum_{\ell = 1}^s \frac{\Gamma_\ell}{\Gamma_{\ell - 1}} \bsp_{s - 1, \ell - 1} \, ,
\end{align}
where
\begin{align*} 
\bsOmega_n \,\coloneqq\, \left[ \Ber_2\left(\left\{\frac{kz}{n}\right\}\right)\right]_{z \in \U_n, k = 0, 1, \ldots, n - 1}\,.
\end{align*}
At each step the vectors $p_{s - 1, \ell - 1}$ can be constructed
recursively as follows
\begin{align}
\label{eq:p_sl}
\bsp_{s, \ell}  \,=\, \bsp_{s - 1, \ell} + \frac{\Gamma_\ell}{\Gamma_{\ell - 1}}\gamma_s \bsOmega_n(z_s, :)
\,\dottimes\,\bsp_{s - 1, \ell - 1} \, ,
\end{align}
where $\bsOmega_n(z_s, :)$ is the row corresponding the new component of
the generating vector $z_s$, and $.*$ denotes component-wise
multiplication, and with $\bsp_{s, 0} = \bsone$, $\bsp_{s, \ell} =
\bszero$ for all $\ell > s$. Note that \eqref{eq:p_sl} is obtained by
splitting the sum according to whether or not $s \in \setu$, as in
\eqref{eq:Mpod_rec} and \eqref{eq:Hsl}. Since the cost of updating
$\bsp_{s, \ell}$ is $\calO(s\,n)$ the total cost of calculating $G_s$ in
each dimension has been reduced to $\calO(n^2 + sn)$ operations.
Additionally, using the concepts from the Fast CBC algorithm \cite{NC06,
NC06np} this product can be performed more efficiently using the FFT which
would further reduce the cost to $\calO(n\log n + sn)$. The total cost of
the algorithm is $\calO(s\,n\log n+ s^2n)$ operations.

\begin{algorithm}[The double CBC algorithm for POD weights]
\label{alg:dcbc_pod}
\hfill\\
Given $n$ and $s$, bounds of the form \eqref{eq:L2bnd}, order dependent
weight factors $\{\Gamma_\ell\}_{\ell = 0}^s$, and the weight in the
first dimension $\gamma_1$, set $z_1= 1$, $H_{0, 0}  = B_1/\Gamma_1$,
$\bsp_{0, 0} = \bsone$. Then for each $i = 2, \dotsc, s$,
\begin{enumerate}
\item For $\ell = 0, \ldots, i - 1$, update $H_{i - 1,
    \ell}$ using \eqref{eq:Hsl} and $\bsp_{i - 1, \ell}$ using
    \eqref{eq:p_sl}.
\item Calculate $\boldsymbol{G}_{i}$ using
    \eqref{eq:Gs_matrixvector} and FFT.
\item Choose  $z_{i} \in \U_n$ to minimise $G_{i}(z_1,
    \ldots, z_{i - 1}, z_{i})$.
\item Set $\gamma_i$ as in \eqref{eq:gamma_j_pod} and update the 
mean-square error bound \eqref{eq:dcbcbndpod}.
\end{enumerate}
\end{algorithm}

We have so far neglected the question of how to choose the order dependent
weight factors $\Gamma_\ell$. Three possible choices are:
\begin{itemize}
\item $\Gamma_\ell$ given \textit{a priori}, such as by the
    common choice $\Gamma_\ell = \ell!$.
\item $\Gamma_\ell = \Gamma_\ell(\lambda)$, that is, the order
    dependent weight factors of the weights
    $\gamma_\setu(\lambda)$ from the formula \eqref{eq:gammalambda}
    below. In this case we are still left with the predicament of how
    to choose $\lambda$, a choice which this algorithm aimed to
    circumvent.
\item $\Gamma_\ell = B_\ell$. Here the recursion for the bound on the
    norm \eqref{eq:Mpod_rec} is the same as the product weight
    case \eqref{eq:Mpw_rec}, that is, the terms $H_{i, \ell}$ are no
    longer required. Further, since there is some inherent connection
    between the form of the bound on the norm and the weights this
    choice seems more natural than the other two.
\end{itemize}

\subsection{The iterated CBC algorithm}\label{sec:icbc}

Combining \eqref{eq:msbnd_e*M} with the upper bound on shift-averaged
worst-case error \eqref{eq:wcebnd} we have that the mean-square error of a
CBC constructed lattice rule approximation is bounded above by
\begin{align*} 
\nonumber\bbE\left(\left|I_sf - \Qsh{n,s}(\bsz; \cdot)f\right|^2\right)
\,\leq\,
&\left(\frac{1}{\varphi(n)} \sum_{\emptyset \neq \setu \subseteq\{1:s\}}\gamma_\setu^\lambda\left(\frac{2\zeta(2\lambda)}{(2\pi^2)^\lambda}\right)^{|\setu|}  \right)^\frac{1}{\lambda} \\
& \times \left(\sum_{\setu \subseteq \{1:s\}}\frac{1}{\gamma_\setu}  B_{|\setu|} \prod_{j\in \setu} b_j^2\right)
\quad \text{for all } \lambda \in \left(\tfrac{1}{2}, 1\right] \, .
\end{align*}
From \cite[Lemma 6.2]{KSS12}, the weights that minimise this error bound
for each $\lambda \in \left(\frac{1}{2}, 1\right]$ are of POD form
\begin{align}
\label{eq:gammalambda}
\gamma_\setu(\lambda) \,=\, \Gamma_{|\setu|}(\lambda) \prod_{j \in \setu} \gamma_j(\lambda) \,=\, \left(B_{|\setu|} \prod_{j \in \setu} \frac{(2\pi^2)^\lambda b_j^2}{2\zeta(2\lambda)}\right)^\frac{1}{1 + \lambda} \, .
\end{align}

For each $\lambda \in \left(\frac{1}{2}, 1\right]$ the corresponding
weights $\bsgamma(\lambda) = \{\gamma_\setu(\lambda)\}_{\setu \subseteq
\{1:s\}}$ can be taken as input into the CBC algorithm to construct a
lattice rule generating vector which, through the weights, depends on 
$\lambda$: $\bsz(\lambda)$. Now that we know the weights
$\bsgamma(\lambda)$ and generating vector $\bsz(\lambda)$ explicitly, from
\eqref{eq:msbnd_e*M}, the mean-square error of the resulting QMC
approximation is bounded by
\begin{align}
\label{eq:gammalambdabnd}
 \bbE\left(\left|I_sf - Q^\mathrm{sh}_{s, n}(\bsz; \cdot)f\right|^2\right)
 \,\leq\, \left(\wcesh{n, s, \bsgamma(\lambda)}(\bsz(\lambda))\right)^2 M_{s, \bsgamma(\lambda)}
 \eqqcolon E_{n,s, \bsz(\lambda)}(\lambda)\, .
\end{align}

The goal of the \textbf{iterated CBC (ICBC) algorithm} is to carry out
iterations of the original CBC algorithm to choose a $\lambda$ that
minimises the right hand side of \eqref{eq:gammalambdabnd}. However, since
each component $z_j$ is obtained by a minimisation over a set of integers
and because this minimisation depends on the weights (and hence
$\lambda$), when treated as a function of $\lambda$ the shift-averaged
worst-case error is discontinuous. Hence, we cannot guarantee that a
minimum exists and as such our algorithm heuristically searches for a
``good'' value of $\lambda$. As mentioned in the introduction, the choice
of $\lambda$ is non-trivial since one needs to balance the size of the
constant and the theoretical convergence rate.

Suppose that in the upper bound \eqref{eq:gammalambdabnd} the generating
vector $\bsz$ remains fixed, then the upper bound, 
$E_{n,s,\bsz}(\lambda)$, is a continuous function of the single
variable $\lambda$ and can be minimised numerically.

The idea behind this algorithm is at each step of the iteration to use
$E_{n,s,\bsz^{(k)}}(\lambda)$ as an approximation to the right hand
side of the upper bound \eqref{eq:gammalambdabnd}. In this way the next
iterate $\lambda_{k + 1}$ is taken to be the minimiser of
$E_{n,s,\bsz^{(k)}}(\lambda)$, which can be found numerically using a
quasi-Newton method.

\begin{algorithm}[The iterated CBC algorithm]\label{alg:icbc}\hfill\\
Given $n$, $s$, bounds of the form \eqref{eq:L2bnd}, an initial
$\lambda_0 \in \left(\frac{1}{2}, 1\right]$, a tolerance $\tau$ and a
maximum number of iterations $k_\mathrm{max}$. For $k = 0, 1, 2, \ldots,
k_\mathrm{max}$:
\begin{enumerate}
\item Generate the weights $\gamma_\setu(\lambda_k)$ using \eqref{eq:gammalambda}.
\item Construct the generating vector $\bsz^{(k)}$ from the
    original CBC algorithm with weights
    $\gamma_\setu(\lambda_k)$.
\item If
    $\left|\dd{}{}{\lambda}E_{n,s,\bsz^{(k)}}(\lambda_{k})\right|
    < \tau$ then end the algorithm. \label{itm:stop}
\item Otherwise, choose $\lambda_{k + 1}$ to be the minimiser of
    $E_{n,s,\bsz^{(k)}}(\lambda)$, found numerically using a
    quasi-Newton algorithm.\label{itm:quasiNewton}
\end{enumerate}
\end{algorithm}

\begin{remark}
\label{rem:dE} For the quasi-Newton algorithm in
Step~\ref{itm:quasiNewton} we require the derivative of
$E_{n,s,\bsz}$, for fixed $\bsz$, with respect to $\lambda$
\begin{align*} 
\nonumber &\dd{}{E_{n,s,\bsz}}{\lambda}
=
\nonumber \left(\sum_{\emptyset \neq \setu \subseteq \{1:s\}} \gamma^\prime_\setu(\lambda) \left(\frac{1}{n} \sum_{k = 0}^{n - 1} \prod_{j \in \setu} \Ber_2\left(\left\{\frac{kz_j}{n}\right\}\right)\right)\right)
\left(\sum_{\setu \subseteq \{1:s\}} \frac{B_{|\setu|} \prod_{j\in \setu} b_j^2}{\gamma_\setu(\lambda)} \right)\\
&-\left(\sum_{\emptyset \neq \setu \subseteq \{1:s\}} \gamma_\setu(\lambda) \left(\frac{1}{n} \sum_{k = 0}^{n - 1} \prod_{j \in \setu} \Ber_2\left(\left\{\frac{kz_j}{n}\right\}\right)\right)\right)
\left(\sum_{\setu \subseteq \{1:s\}} \frac{\gamma^\prime_\setu(\lambda)}{\gamma^2_\setu(\lambda)} B_{|\setu|} \prod_{j\in \setu} b_j^2\right) \, ,
\end{align*}
where the derivative of each weight with respect $\lambda$ is
\begin{align*} 
\gamma^\prime_\setu(\lambda)
 \,=\,
\gamma_\setu(\lambda)
\left(\frac{-\log\left(B_{|\setu|}\prod_{j \in \setu}\frac{\left(2\pi^2\right)^\lambda b_j^2}{2\zeta(2\lambda)}\right)}{(1 + \lambda)^2} + \frac{|\setu|\left(\log(2\pi^2) - \frac{2\zeta^\prime(2\lambda)}{\zeta(2\lambda)}\right)}{(1 + \lambda)}\right)\, .
\end{align*}
\end{remark}

\section{Numerical results}\label{sec:num_cbc}

For the numerical results we look at how each new method performs for
different types of bounds \eqref{eq:L2bnd}, that is, for different sequences
$\bsb \coloneqq (b_i)_{i = 1}^s$ and $\bsB \coloneqq (B_\ell)_{\ell = 1}^s$. 
As a figure of merit we will use the upper bound on the 
root-mean-square (RMS) error (cf. \eqref{eq:msbnd_e*M}), which we denote by 
 \begin{align*}
 E_{n, s, \bsb, \bsB}(\bsgamma, \bsz) \,\coloneqq\, \wcesh{n, s, \bsgamma}(\bsz)\sqrt{M_{s, \bsgamma}}\,.
 \end{align*}
Here we have specifically used this notation to indicate the dependence 
on the sequences $\bsb,\,\bsB$ but also to emphasise that this upper bound is 
primarily a function of $\bsgamma$ and $\bsz$, the outputs of our algorithms.
Note also that in Tables \ref{tab:pw_j^-2}--\ref{tab:pod_fact_0.5^j} the results given
for our algorithms are \emph{guaranteed} error bounds (in the RMS sense) for integrands 
which satisfy the appropriate bounds.

In the examples let the maximum dimension be $s = 100$ and the number 
of points $n$ be prime and ranging up to 32,003. Here we choose 
$n$ to be prime because it makes the ``fast'' aspects of the implementation 
simpler, but note that $n$ prime is not a requirement of either algorithm. 
Also, we use the notation ``e'' for the base-10 exponent.

\subsection{The case $B_\ell = 1$}

As a start, let $B_\ell = 1$ for all $\ell = 1, \ldots, s$, and
consider the cases $b_i= i^{-2}$, $b_i = 0.5^i$ and $b_i = 0.8^i$. 
With $B_\ell = 1$ it is natural to restrict attention to product weights. The
results for this case are given in Tables~\ref{tab:pw_j^-2}, \ref{tab:pw_0.5^j},
\ref{tab:pw_0.8^j}, respectively. These tables compare results for
$E_{n, s, \bsb, \bsB}$ from the DCBC, ICBC algorithms with the original CBC
algorithm using common choices of product weights. The choices of
common weights are $\gamma_i = i^{-1.1}$, $\gamma_i = i^{-2}$ and
$\gamma_i(\lambda)$ as in \eqref{eq:gammalambda} with $\lambda = 0.6, 1$.
The row labelled ``rate'' gives the exponent ($x$) for a least-squares fit
of the result $E_{n, s, \bsb, \bsB}$ to a power law ($n^{-x}$).
\begin{table}[p]
\centering
\begin{tabular}{r|c|c||c|c|c|c}
& \multicolumn{2}{|c||}{Variants} & \multicolumn{4}{|c}{CBC with common weights}\\
\hline
$n$ & \textbf{DCBC} & ICBC & $\gamma_i = i^{-1.1}$ & $\gamma_i = i^{-2}$ & $\gamma_i(\lambda = 0.6)$ & $\gamma_i(\lambda = 1)$\\
\hline
251 & \textbf{6.8e-3} & 7.0e-3 & 3.5e-2 & 7.5e-3 & 8.2e-3 &  1.3e-2 \\
499 & \textbf{3.5e-3} & 3.6e-3 & 2.1e-2 & 4.0e-3 & 4.2e-3 & 7.6e-3 \\
997 & \textbf{1.8e-3} & 1.9e-3 & 1.3e-2 & 2.2e-3 & 2.2e-3 & 4.3e-3 \\
1999 & \textbf{9.7e-4} & 1.0e-3 & 7.8e-3 & 1.2e-3 & 1.1e-3 & 2.4e-3 \\
4001 & \textbf{5.1e-4} & 5.2e-4 & 4.8e-3 & 6.3e-4 & 5.8e-4 & 1.4e-3\\
7993 &  \textbf{2.7e-4} & 2.7e-4 & 2.9e-3 & 3.4e-4 & 2.9e-4 & 7.8e-4\\
16001 & \textbf{1.4e-4} & 1.4e-4 & 1.8e-3 & 1.9e-4 & 1.5e-4 & 4.4e-4\\
32003 & \textbf{7.4e-5} & 7.5e-5 & 1.1e-3 & 1.0e-4 & 7.9e-5 & 2.5e-4\\
\hline
rate & \textbf{0.93} & 0.93 & 0.71 & 0.88 & 0.95 & 0.82
\end{tabular}
\vspace*{-0.3cm}\caption{Results for the root-mean-square error bound $E_{n, s, \bsb, \bsB}$ for $b_i = i^{-2}$: DCBC, ICBC and CBC results for common choices of weights.}
\label{tab:pw_j^-2}
\end{table}
\begin{table}
\centering
\begin{tabular}{r|c|c||c|c|c|c}
& \multicolumn{2}{|c||}{Variants} & \multicolumn{4}{|c}{CBC with common weights}\\
\hline
$n$ & DCBC & \textbf{ICBC} & $\gamma_i = i^{-1.1}$ & $\gamma_i = i^{-2}$ & $\gamma_i(\lambda = 0.6)$ &  $\gamma_i(\lambda = 1)$\\
\hline
251 & 4.1e-3 &\textbf{ 3.3e-3} &  2.8e-2 & 5.5e-3 & 3.3e-3 &  6.7e-3 \\
499 & 2.1e-3 & \textbf{1.7e-3} & 1.7e-2 & 2.9e-3 & 1.7e-3 & 3.6e-3 \\
997 & 1.1e-3 & \textbf{8.6e-4} & 1.0e-2 & 1.6e-3 & 8.6e-4 & 2.0e-3 \\
1999 & 5.6e-4 & \textbf{4.4e-4} & 6.2e-3 & 8.6e-4 & 4.4e-4 & 1.1e-3 \\
4001 & 2.9e-4 & \textbf{2.2e-4} & 3.8e-3 & 4.6e-4 & 2.2e-4 & 5.8e-4\\
7993 &1.5e-4 & \textbf{1.1e-4} & 2.3e-3 & 2.5e-4 & 1.1e-4 & 3.1e-4\\
16001 & 7.6e-5 & \textbf{5.9e-5} & 1.4e-3 & 1.4e-4 & 5.9e-5 & 1.7e-4 \\
32003 & 3.9e-5 & \textbf{3.0e-5} & 8.7e-4 & 7.5e-5 & 3.0e-5 & 9.3e-5 \\
\hline
rate & 0.96 & \textbf{0.96} & 0.71 & 0.88 &  0.97 & 0.88
\end{tabular}
\vspace*{-0.3cm}\caption{Results for the root-mean-square error bound $E_{n, s, \bsb, \bsB}$ for $b_i = 0.5^i$:  DCBC, ICBC and CBC results for common choices of weights.}
\label{tab:pw_0.5^j}
\end{table}
\begin{table}
\centering
\begin{tabular}{r|c|c||c|c|c|c}
& \multicolumn{2}{|c||}{Variants} & \multicolumn{4}{|c}{CBC with common weights}\\
\hline
$n$ & DCBC & \textbf{ICBC} & $\gamma_i = i^{-1.1}$ & $\gamma_i = i^{-2}$ & $\gamma_i(\lambda = 0.6)$ & $\gamma_i(\lambda = 1)$\\
\hline
251 & 9.9e-2 & \textbf{8.3e-2} & 2.0e-1 & 2.8 & 1.6e-1  & 1.2e-1 \\
499 & 5.7e-2 & \textbf{5.0e-2} & 1.2e-1 & 1.5 & 8.9e-2  & 7.2e-2 \\
997 & 3.5e-2 & \textbf{2.9e-2} & 7.5e-2 & 8.2e-1 & 5.1e-2 & 4.5e-2 \\
1999 & 2.1e-2 & \textbf{1.7e-2} & 4.6e-2 & 4.4e-1 & 2.8e-2 & 2.8e-2 \\
4001 & 1.2e-2 & \textbf{1.0e-2} & 2.8e-2 & 2.4e-1 & 1.6e-2 & 1.8e-2 \\
7993 & 7.3e-3 & \textbf{5.9e-3} & 1.7e-2 & 1.3e-1 & 9.1e-3 & 1.1e-2 \\
16001 & 4.3e-3 & \textbf{3.5e-3} & 1.0e-2 & 7.1e-2 & 5.0e-3 & 6.7e-3 \\
32003 & 2.5e-3 & \textbf{2.0e-3} & 6.4e-3 & 3.9e-2 & 2.9e-3 & 4.2e-3 \\
\hline
rate & 0.75 & \textbf{0.75} & 0.71 & 0.88 & 0.82 & 0.69
\end{tabular}
\vspace*{-0.3cm}\caption{Results for the root-mean-square error bound $E_{n, s, \bsb, \bsB}$ for $b_i = 0.8^i$: DCBC, ICBC and CBC results for common choices of weights.}
\label{tab:pw_0.8^j}
\end{table}

Comparing results for the two new algorithms, note that for $b_i = i^{-2}$
(Table~\ref{tab:pw_j^-2}) the results of the DCBC algorithm are
better, while for $b_i = 0.5^i$, $0.8^i$ (Tables~\ref{tab:pw_0.5^j},
\ref{tab:pw_0.8^j}) the ICBC algorithm produces better bounds, and so it
is not the case that one algorithm is always better than the other. However,
in all cases the ICBC algorithm performs as well as or better than
the original CBC algorithm with common choices of weights.

Table~\ref{tab:pw_icbc} gives the final value  of $\lambda$, denoted $\lambda^*$,
resulting from the ICBC algorithm for our three choices of $\bsb$, along
with the resulting RMS error bound. Notice that, as expected the value of
$\lambda^*$ found by the algorithm appears to approach 0.5 as $n$ increases,
albeit very slowly.
\begin{table} [p]
\centering
\begin{tabular}{r||c|c|c}
$n$ & $b_i = i^{-2}$ & $b_i = 0.5^i$ &$b_i = 0.8^i$\\
\hline
251 & 0.672 & 0.616 & 0.756 \\
499 & 0.668 & 0.615 & 0.744 \\
997 & 0.661 & 0.610  & 0.735 \\
1999 & 0.657 & 0.607 & 0.725 \\
4001 & 0.652 & 0.604  & 0.715 \\
7993 & 0.645 & 0.601 & 0.711 \\
16001 & 0.642 & 0.597  & 0.700 \\
32003 &  0.637 & 0.594 & 0.696  \\
\end{tabular}
\vspace*{-0.3cm}\caption{Value of $\lambda^*$ from ICBC for product weights with $b_i = i^{-2}$, $0.5^i$ and $0.8^i$.}
\label{tab:pw_icbc}
\end{table}

\begin{figure}
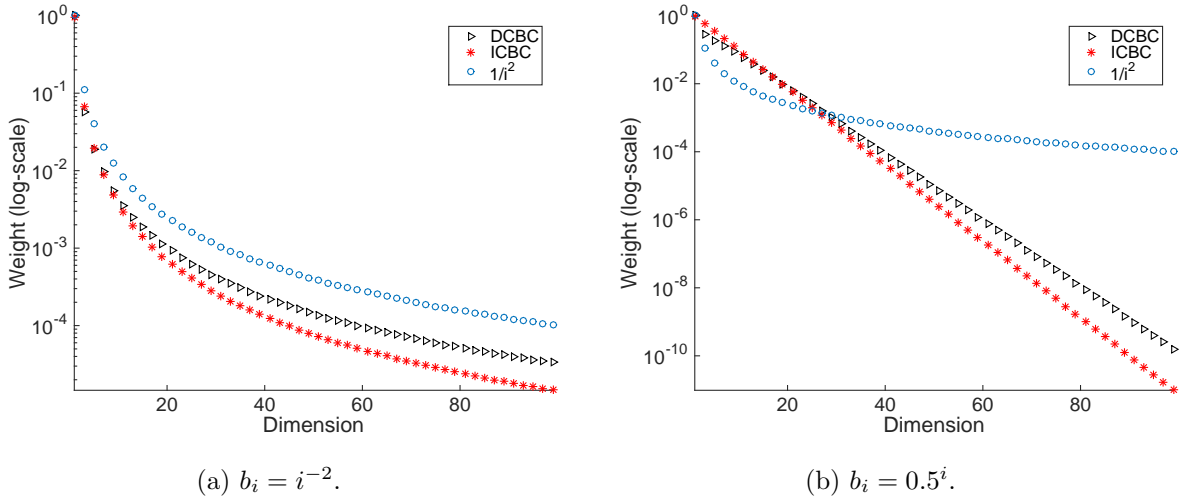
 
\hspace*{-1cm}
\begin{subfigure}[b]{0.49\textwidth}
\includegraphics[scale=0.4]{mcm_weights1.eps}
\vspace*{-0.3cm}\caption{$b_i = i^{-2}$.}
\label{fig:weight_j^2}
\end{subfigure}
\hspace*{1cm}
\begin{subfigure}[b]{0.49\textwidth}
\includegraphics[scale=0.4]{mcm_weights2.eps}
\vspace*{-0.3cm}\caption{$b_i = 0.5^{i}$.}
\label{fig:weight_2^-j}
\end{subfigure}
\vspace*{-0.3cm}\caption{Weight in each dimension found from DCBC and ICBC, with $\gamma_i = i^{-2}$ for comparison ($n = 1999$).}
\end{figure}

Figures~\ref{fig:weight_j^2} and \ref{fig:weight_2^-j} compare the
weight $\gamma_i$ in each dimension for the DCBC, ICBC algorithms,
with $\gamma_i = i^{-2}$ as a reference, for $b_i = i^{-2}$ and $b_i =
0.5^i$, respectively. Here $n = 1999$.
%

Figure~\ref{fig:errdim} compares the RMS error bound in each dimension for
the DCBC and ICBC algorithms, for $b_i = 0.8^i$ with $n = 1999$.
Notice that due to the greedy nature of the DCBC algorithm, it
performs better in the earlier dimensions, however, the ICBC
algorithm produces weights with a better bound overall.
\begin{figure} 
\centering
\includegraphics[scale=0.4]{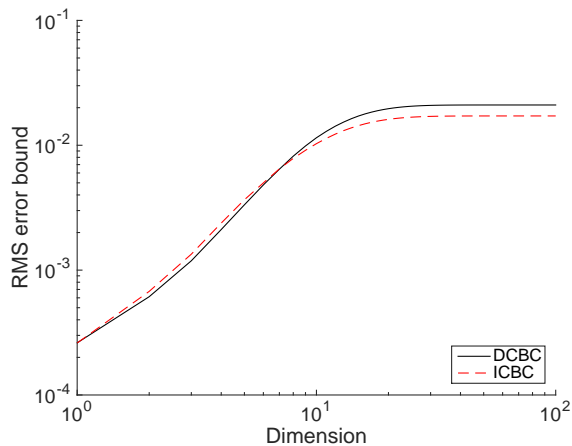} 
\caption{$E_{n, s, \bsb, \bsB}$ in each dimension for DCBC and ICBC with $b_i = 0.8^i$ (loglog scale).}
\label{fig:errdim}
\end{figure}

\subsection{POD weights}

Now, we look at the performance of each algorithm for combinations of
$B_\ell = \ell$ and $B_\ell = \ell!$  with $b_i = i^{-2}$ and $b_i =
0.5^i$, the results are given in
Tables~\ref{tab:pod_ell_j^-2}--\ref{tab:pod_fact_0.5^j}. Each table
presents the RMS error bound $E_{n, s, \bsb, \bsB}$ obtained from the DCBC
algorithm for different choices of order dependent weights, and
those obtained from the ICBC algorithm along with the output $\lambda^*$.
With regards to the strategy for choosing $\Gamma_\ell$ in the DCBC
algorithm for POD weights, in three out of the four cases the best
choice was to let $\Gamma_\ell = B_\ell$ (see
Tables~\ref{tab:pod_ell_j^-2}--\ref{tab:pod_ell_0.5^j}). However, in all
cases the results are similar and indicate that the choice of
$\Gamma_\ell$ does not greatly effect the final bound $E_{n, s, \bsb, \bsB}$.
\begin{table} [!h]
\centering
\begin{tabular}{r||c|c||c|c}
& \multicolumn{2}{c||}{DCBC} & \multicolumn{2}{|c}{\textbf{ICBC}}\\
\hline
$n$ & $E_{n, s, \bsb, \bsB}$ ($\Gamma_\ell = B_\ell$) & $E_{n, s, \bsb, \bsB}$ ($\Gamma_\ell = \ell!$) & \textbf{$E_{n, s, \bsb, \bsB}$} & $\lambda^*$\\
\hline
251 & 8.6e-3 & 8.5e-3 & \textbf{8.7e-3} & 0.680 \\
499 & 4.6e-3 & 4.5e-3 & \textbf{4.6e-3} & 0.673 \\
997 & 2.5e-3 & 2.5e-3 & \textbf{2.5e-3} & 0.666 \\
1999 & 1.3e-3 & 1.3e-3 & \textbf{1.3e-3} & 0.659 \\
4001 & 6.9e-4 & 7.0e-4 & \textbf{6.8e-4} & 0.655\\
7993 & 3.7e-4 & 3.7e-4 & \textbf{3.6e-4} & 0.650 \\
16001 & 1.9e-4 & 2.0e-4 & \textbf{1.9e-4} & 0.645 \\
32003 & 1.0e-4 & 1.1e-4 & \textbf{1.0e-4} & 0.640 \\
\hline
rate & 0.91 & 0.90 & \textbf{0.92} &
\end{tabular}
\vspace*{-0.3cm}\caption{POD weight results with $b_i= i^{-2}$ and $B_\ell = \ell$: $E_{n, s, \bsb, \bsB}$ from DCBC for different choices of $\Gamma_\ell$, $E_{n, s, \bsb, \bsB}$ from ICBC and $\lambda^*$ from ICBC.}
\label{tab:pod_ell_j^-2}
\end{table}
\begin{table} [!h]
\centering
\begin{tabular}{r||c|c||c|c}
& \multicolumn{2}{c||}{\textbf{DCBC}} & \multicolumn{2}{|c}{ICBC}\\
\hline
$n$ & $E_{n, s, \bsb, \bsB}$ ($\Gamma_\ell = B_\ell$) & $E_{n, s, \bsb, \bsB}$ ($\Gamma_\ell = \ell$) & $E_{n, s, \bsb, \bsB}$ & $\lambda^*$\\
\hline
251 & \textbf{9.2e-3} & 1.1e-2 & 9.7e-3 & 0.692 \\
499 & \textbf{5.0e-3} & 5.8e-3 & 5.1e-3 & 0.685 \\
997 & \textbf{2.7e-3} & 3.2e-3 & 2.8e-3 & 0.679 \\
1999 & \textbf{1.5e-3} & 1.7e-3 & 1.5e-3 & 0.673 \\
4001 & \textbf{7.9e-4} & 9.6e-4 & 8.0e-4 & 0.667 \\
7993 & \textbf{4.2e-4} & 5.2e-4 & 4.3e-4 & 0.661 \\
16001 & \textbf{2.3e-4} & 2.8e-4 & 2.3e-4 & 0.656 \\
32003 & \textbf{1.2e-4} & 1.6e-4 & 1.3e-4 & 0.651 \\
\hline
rate & \textbf{0.89} & 0.87 & 0.89 &
\end{tabular}
\vspace*{-0.3cm}\caption{POD weight results with $b_i= i^{-2}$ and $B_\ell = \ell!$: $E_{n, s, \bsb, \bsB}$ from DCBC for different choices of $\Gamma_\ell$, $E_{n, s, \bsb, \bsB}$ from ICBC and $\lambda^*$ from ICBC.}
\label{tab:pod_fact_j^-2}
\end{table}
\begin{table} [!h]
\centering
\begin{tabular}{r||c|c||c|c}
& \multicolumn{2}{c||}{DCBC} & \multicolumn{2}{|c}{\textbf{ICBC}}\\
\hline
$n$ & $E_{n, s, \bsb, \bsB}$ ($\Gamma_\ell = B_\ell$) & $E_{n, s, \bsb, \bsB}$ ($\Gamma_\ell = \ell!$) & $E_{n, s, \bsb, \bsB}$ & $\lambda^*$\\
\hline
251 &  4.9e-3 & 5.0e-3 & \textbf{3.8e-3} & 0.619 \\
499 & 2.5e-3 &  2.6e-3 & \textbf{2.0e-3} & 0.617 \\
997 & 1.3e-3 & 1.4e-3 & \textbf{1.0e-3} & 0.612 \\
1999 & 6.9e-4 & 7.2e-4 & \textbf{5.3e-4} & 0.608 \\
4001 & 3.6e-4 & 3.8e-4 & \textbf{2.7e-4} & 0.605 \\
7993 & 1.9e-4 & 2.0e-4 & \textbf{1.4e-4} & 0.602 \\
16001 & 9.8e-5 & 1.0e-4 & \textbf{7.2e-5} & 0.597 \\
32003 & 5.1e-5 & 5.3e-5 & \textbf{3.7e-5} & 0.595 \\
\hline
rate & 0.94 & 0.93 & \textbf{0.95} &
\end{tabular}
\vspace*{-0.3cm}\caption{POD weight results with $b_i= 0.5^i$ and $B_\ell = \ell$: $E_{n, s, \bsb, \bsB}$ from DCBC for different choices of $\Gamma_\ell$, $E_{n, s, \bsb, \bsB}$ from ICBC and $\lambda^*$ from ICBC.}
\label{tab:pod_ell_0.5^j}
\end{table}
\begin{table} [!h]
\centering
\begin{tabular}{r||c|c||c|c}
& \multicolumn{2}{c||}{DCBC} & \multicolumn{2}{|c}{\textbf{ICBC}}\\
\hline
$n$ & $E_{n, s, \bsb, \bsB}$ ($\Gamma_\ell = B_\ell$) & $E_{n, s, \bsb, \bsB}$ ($\Gamma_\ell = \ell$) & $E_{n, s, \bsb, \bsB}$ & $\lambda^*$\\
\hline
251 &  5.1e-3 & 5.1e-3 & \textbf{4.0e-3} & 0.625 \\
499 & 2.6e-3 & 2.6e-3 & \textbf{2.1e-3} & 0.622 \\
997 & 1.4e-3 & 1.4e-3 & \textbf{1.1e-3} & 0.618 \\
1999 & 7.3e-4 & 7.3e-4 & \textbf{5.6e-4} & 0.614 \\
4001 & 3.9e-4 & 3.8e-4 & \textbf{2.9e-4} & 0.608 \\
7993 & 2.0e-4 & 2.0e-4 & \textbf{1.5e-4} & 0.604 \\
16001 & 1.1e-4 & 1.0e-4 & \textbf{7.9e-5} & 0.602 \\
32003 & 5.6e-5 & 5.5e-5 & \textbf{4.1e-5} & 0.599 \\
\hline
rate & 0.93 & 0.93 & \textbf{0.95} &
\end{tabular}
\vspace*{-0.3cm}\caption{POD weight results with $b_i= 0.5^i$ and $B_\ell = \ell!$: $E_{n, s, \bsb, \bsB}$ from DCBC for different choices of $\Gamma_\ell$, $E_{n, s, \bsb, \bsB}$ from ICBC and $\lambda^*$ from ICBC.}
\label{tab:pod_fact_0.5^j}
\end{table}

\section{Concluding remark}

We introduced two new CBC algorithms, the double CBC (DCBC)
algorithm and the iterated CBC (ICBC) algorithm, which only require
parameters specified by the problem to determine the point set for
QMC integration, and provide guaranteed error bounds by also choosing ``good'' weight 
parameters. The numerical results show different examples where each algorithm 
performs better than the other. Both algorithms generally outperform the original CBC 
algorithm with common choices of weights. In all cases the entries $E_{n, s, \bsb, \bsB}$
provide guaranteed upper bounds on the root-mean-square error for the randomly shifted
integration rules, under the indicated assumptions on the bound parameters $B_\ell$ and $b_i$.


\bibliographystyle{plain}

\noindent
Alexander D. Gilbert\\
alexander.gilbert@unsw.edu.au\\
School of Mathematics and Statistics, University of New South Wales, Sydney NSW 2052, Australia

\smallskip
\noindent
Frances Y. Kuo\\
f.kuo@unsw.edu.au\\
School of Mathematics and Statistics, University of New South Wales, Sydney NSW 2052, Australia

\smallskip
\noindent
Ian H. Sloan\\
i.sloan@unsw.edu.au\\
School of Mathematics and Statistics, University of New South Wales, Sydney NSW 2052, Australia

\end{document}